\documentclass[12pt]{article}%
\usepackage[intlimits]{amsmath}
\usepackage{amssymb}
\usepackage{latexsym}
\usepackage[T1]{fontenc}
\usepackage[sc]{mathpazo}
\usepackage{color}
\usepackage[colorlinks]{hyperref}
\usepackage{amsfonts}
\usepackage{microtype}
\usepackage{graphicx}%
\setcounter{MaxMatrixCols}{30}
\linespread{1.1}
\definecolor {refcol}{RGB}{40,0,255}
\hypersetup{colorlinks=true,allcolors=refcol}
\setlength{\textwidth}{7.0in} \setlength{\textheight}{9.0in}
\setlength{\topmargin}{-15pt} \setlength{\headsep}{0pt}
\setlength{\headheight}{0pt} \setlength{\oddsidemargin}{-16pt}
\setlength{\evensidemargin}{-16pt}
\makeatletter
\newfont{\footsc}{cmcsc10 at 8truept}
\newfont{\footbf}{cmbx10 at 8truept}
\newfont{\footrm}{cmr10 at 10truept}
\pagestyle{plain}
\newtheorem{theorem}{Theorem}

\newtheorem{corollary}[theorem]{Corollary}

\newtheorem{problem}[theorem]{Problem}
\newtheorem{proposition}[theorem]{Proposition}

\newenvironment{proof}[1][Proof]{\noindent{\textbf {#1}  }}  {\hfill$\Box$\bigskip}
\begin{document}

\title{\textbf{On the minimum trace norm of }$\left(  0,1\right)  $\textbf{-matrices}}
\author{V. Nikiforov\thanks{Department of Mathematical Sciences, University of
Memphis, Memphis TN 38152, USA; \textit{email: vnikifrv@memphis.edu}} \ and N.
Agudelo\thanks{Instituto de Matem\'{a}ticas, Universidad de Antioquia,
Medell\'{\i}n, Colombia; \textit{email: nagudel83@gmail.com}}}
\maketitle

\begin{abstract}
The trace norm of a matrix is the sum of its singular values. This paper
presents results on the minimum trace norm $\psi_{n}\left(  m\right)  $ of
$\left(  0,1\right)  $-matrices of size $n\times n$ with exactly $m$ ones. It
is shown that:

(1) if $n\geq2$ and $n<m\leq2n,$ then $\psi_{n}\left(  m\right)  \leq
\sqrt{m+\sqrt{2\left(  m-1\right)  }}$, with equality if and only if $m$ is a prime;

(2) if $n\geq4$ and $2n<m\leq3n,$ then $\psi_{n}\left(  m\right)  \leq
\sqrt{m+2\sqrt{2\left\lfloor m/3\right\rfloor }}$, with equality if and only
if $m$ is a prime or a double of a prime;

(3) if $3n<m\leq4n,$ then $\psi_{n}\left(  m\right)  \leq\sqrt{m+2\sqrt{m-2}}%
$, with equality if and only if there is an integer $k\geq1$ such that
$m=12k\pm2$ and $4k\pm1,6k\pm1,12k\pm1$ are primes.

\medskip

\textbf{AMS classification: }\textit{15A42; 05C50.}

\textbf{Keywords:}\textit{ trace norm; }$\left(  0,1\right)  $-matrix\textit{;
singular values.}

\end{abstract}

\section{Introduction}

The \emph{trace norm} $\left\Vert A\right\Vert _{\ast}$ of a matrix $A$, that
is to say, the sum of the singular values of $A$,\emph{ }is one of the most
studied matrix parameters. In particular, the trace norm of the adjacency
matrices of graphs has been long investigated under the name \emph{graph
energy,} a concept introduced by Gutman in \cite{Gut78}; for an overview of
this vast research, see \cite{GLS12}.

A number of extremal problems about the trace norm of matrices have been
presented in the survey \cite{Nik16}, including many upper bounds on
$\left\Vert A\right\Vert _{\ast}$. Since lower bounds on $\left\Vert
A\right\Vert _{\ast}$ have not been studied in comparative detail, in this
paper we initiate the study of the minimum trace norm of square $\left(
0,1\right)  $-matrices with given number of ones. This topic turns out to be
both fascinating and hard; in particular, we show that some rather simple
questions are tantamount to unsolved problems about prime numbers.

Thus, let the integers $n$ and $m$ satisfy $n\geq2$ and $1\leq m\leq n^{2}$,
write $\mathbb{Z}_{n}\left(  m\right)  $ for the set of $\left(  0,1\right)
$-matrices of size $n\times n$ with exactly $m$ ones, and set%
\[
\psi_{n}\left(  m\right)  =\min\left\{  \left\Vert A\right\Vert _{\ast}%
:A\in\mathbb{Z}_{n}\left(  m\right)  \right\}  .
\]
We are interested in the following natural problem:

\begin{problem}
\label{pr1}Find $\psi_{n}\left(  m\right)  $ for all admissible $n$ and $m.$
\end{problem}

It is not hard to see that $\psi_{n}\left(  m\right)  \geq\sqrt{m}$; in fact,
writing $\left\vert A\right\vert _{2}$ for the Frobenius norm of a matrix $A$,
one can come up with the following simple result (see, e.g., Theorem 4.3 of
\cite{Nik16}):\medskip

\emph{If }$A$\emph{ is a complex matrix, then} $\left\Vert A\right\Vert
_{\ast}\geq\left\vert A\right\vert _{2}$.\emph{ Equality holds if and only if
the rank of }$A$\emph{ is }$1$\emph{.\medskip}

It is trivial to construct a complex matrix of rank $1$ with arbitrary
$\left\vert A\right\vert _{2}$, but this is not always possible should the
matrix belong to $\mathbb{Z}_{n}\left(  m\right)  ,$ e.g., $\mathbb{Z}%
_{3}\left(  5\right)  $ contains no matrix of rank $1$. Consequently, finding
$\psi_{n}\left(  m\right)  $ turns out to be a subtle and challenging problem,
sometimes leading to extremely difficult number-theoretical questions. We
solved Problem \ref{pr1} for $m\leq3n$ and partially solved it for
$3n<m\leq4n$; even at that simple stage it becomes clear that the full
solution of Problem 1 is beyond the reach of present day mathematics.

Before stating our main results, note that the case $1\leq m\leq n$ is
trivial, as any $n\times n$ matrix with $m$ ones in a single row or column
implies that $\psi_{n}\left(  m\right)  =\sqrt{m}$. In contrast, the cases
$n<m\leq2n$ and $2n<m\leq3n$ are far from obvious.

As mentioned above, $\psi_{n}\left(  m\right)  =\sqrt{m}$ if and only if
$\mathbb{Z}_{n}\left(  m\right)  $ contains a matrix of rank $1$, that is to
say, if there exist integers $a$ and $b$ such that $2\leq a\leq b\leq n$ and
$m=ab.$ To refer to these special cases, let $C(n)$ denote the set of all $k$
such that $k=ab$ for some integers $a$ and $b$ satisfying $2\leq a\leq b\leq
n$. The structure of the set $C(n)$ is nontrivial in general, but it can be
described explicitly if $m\leq3n$. For example, if $n<m\leq2n$, one finds that
$m\in C\left(  n\right)  $ if and only if $m$ is not a prime. Accordingly, we
come up with the following statement:

\begin{theorem}
\label{t1}Let $n\geq2$ and $n<m\leq2n$. If $m$ is a prime, then%
\[
\psi_{n}\left(  m\right)  =\sqrt{m+\sqrt{2\left(  m-1\right)  }}\text{;}%
\]
otherwise $\psi_{n}\left(  m\right)  =\sqrt{m}$.
\end{theorem}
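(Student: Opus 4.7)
The plan is to dispatch the composite case with a rank-$1$ construction, build an explicit rank-$2$ minimizer when $m$ is prime, and then establish the matching lower bound, which is where essentially all the work lies. If $m$ is composite, write $m=ab$ with $2\le a\le b$; since $m\le 2n$ we have $b\le m/2\le n$, so the $n\times n$ matrix with ones in an $a\times b$ block has rank $1$ and, by the rank-$1$ equality case recalled in the introduction, trace norm $\sqrt{m}$; combined with the trivial $\psi_n(m)\ge |A|_2=\sqrt{m}$ this gives $\psi_n(m)=\sqrt{m}$. From now on $m$ is prime, hence odd, since $m>n\ge 2$. For the upper bound $\psi_n(m)\le\sqrt{m+\sqrt{2(m-1)}}$, I take $A$ with ones in the first $(m+1)/2$ columns of row~$1$ and the first $(m-1)/2$ columns of row~$2$; this fits in $\mathbb{Z}_n(m)$ since $m\le 2n$ forces $(m+1)/2\le n$. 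The nonzero block of $AA^{T}$ is then a $2\times 2$ matrix of trace $m$ and determinant $(m-1)/2$, so $\|A\|_*^{2}=\sigma_1^{2}+\sigma_2^{2}+2\sigma_1\sigma_2=m+\sqrt{2(m-1)}$.

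For the matching lower bound, let $A\in\mathbb{Z}_n(m)$ be arbitrary. Since $m$ is prime and $m>n$, the characterization in the introduction gives that $\mathbb{Z}_n(m)$ contains no rank-$1$ element, so $\mathrm{rank}\,A\ge 2$. The identity $\|A\|_*^{2}=m+2\sum_{i<j}\sigma_i\sigma_j$ combined with
\[
\sum_{i<j}\sigma_i\sigma_j\;=\;\sum_{i<j}\sqrt{\sigma_i^{2}\sigma_j^{2}}\;\ge\;\sqrt{\sum_{i<j}\sigma_i^{2}\sigma_j^{2}}
\]
(the elementary $\sum\sqrt{x_k}\ge\sqrt{\sum x_k}$ for $x_k\ge 0$, tight iff $\mathrm{rank}\,A\le 2$) gives $\|A\|_*^{2}\ge m+2\sqrt{e_{2}(\sigma^{2})}$, where $e_{2}(\sigma^{2})=\sum_{i<j}\sigma_i^{2}\sigma_j^{2}$. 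By Cauchy--Binet, $e_{2}(\sigma^{2})$ equals the sum of squared $2\times 2$ minors of $A$; for a $(0,1)$-matrix this is the non-negative integer $N(A):=\#\{2\times 2\text{ submatrices of $A$ with non-vanishing determinant}\}$. The theorem therefore reduces to the combinatorial assertion
\[
N(A)\;\ge\;(m-1)/2
\]
for every $A\in\mathbb{Z}_n(m)$ of rank $\ge 2$ with $n<m\le 2n$.

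This combinatorial inequality is the main obstacle. Counting two-edge configurations in the bipartite graph $B$ of $A$ yields
\[
N(A)\;=\;\binom{m}{2}-\sum_i\binom{r_i}{2}-\sum_j\binom{c_j}{2}-2Q(B),
\]
with $r_i,c_j$ the row and column sums of $A$ and $Q(B)$ the number of $K_{2,2}$-subgraphs of $B$; equivalently, the goal is $\sum\binom{r_i}{2}+\sum\binom{c_j}{2}+2Q(B)\le(m-1)^{2}/2$. Since $m\le 2n$ forces $(r_i)$ and $(c_j)$ to concentrate near $\{0,1,2\}$ on average, I would run a case analysis on the row-sum profile: in each case the only way to approach equality is to force $A$ into an $a\times b$ all-ones rectangle, and the primality of $m$ explicitly forbids such a configuration. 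The surviving extremal profile is exactly the nested matrix of the first paragraph, matching $N(A)=(m-1)/2$ and hence the claimed $\psi_n(m)=\sqrt{m+\sqrt{2(m-1)}}$.
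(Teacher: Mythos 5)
Your reduction of the lower bound to a combinatorial inequality is elegant and the algebra is correct: $\|A\|_*^{2}=m+2\sum_{i<j}\sigma_i\sigma_j\geq m+2\sqrt{e_2(\sigma^2)}$, and for a $(0,1)$-matrix Cauchy--Binet does give $e_2(\sigma^2)=\sum|\det(A[I,K])|^2=N(A)$ with your formula $N(A)=\binom{m}{2}-\sum_i\binom{r_i}{2}-\sum_j\binom{c_j}{2}-2Q(B)$ (each all-ones $2\times2$ absorbs exactly the two diagonal pairs). The composite case and the upper-bound construction are also fine. So the theorem has been correctly reduced to: \emph{every} $A\in\mathbb{Z}_n(m)$ of rank at least $2$ with $n<m\le 2n$ satisfies $N(A)\ge(m-1)/2$.

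But that inequality is the entire content of the lower bound, and you do not prove it. ``I would run a case analysis on the row-sum profile'' is a plan, not an argument, and it is far from clear that it is short: the row sums can be any partition of $m$ into at most $n$ parts, the quantity $Q(B)$ couples rows to one another, and you would have to rule out every non-nested configuration, not merely the rectangular ones. You also assert without proof that among the profiles approaching equality the only survivor is the two-row nested matrix. None of this is routine, and in fact it is exactly where the paper invests its effort. The paper's route is different and is actually carried out: it proves a structural theorem (Theorem \ref{tsm}) showing that any $A$ with $\sqrt{m}<\|A\|_*<\sqrt{m-1}+1$ must be a two-step step matrix $\left[\begin{smallmatrix}J_{s,p}&J_{s,1}\\J_{r,p}&0\end{smallmatrix}\right]$, by exhibiting a short list of forbidden $3\times q$ submatrices whose second and third singular values are too large (via Proposition \ref{pro1}); then Theorem \ref{t1} finishes by ruling out $r\ge2$ and $\min(s,t)\ge2$ with two more small forbidden blocks. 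If you want to salvage your approach, you must actually prove $N(A)\ge(m-1)/2$; one viable way is to mimic the paper and first show $A$ must be a two-step step matrix (any offending submatrix already forces $N$ large), at which point $N(A)=prs(q-s)$ is explicit and easily minimized. As written, the proposal has a genuine gap.
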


The main strategy of the proof of Theorem \ref{t1} is taking a matrix
$A\in\mathbb{Z}_{n}\left(  m\right)  $ with $\left\Vert A\right\Vert _{\ast
}=\psi_{n}\left(  m\right)  $ and showing that $A$ cannot contain certain
small submatrices, forcing eventually that $A$ has rank $2$ and has a
particular shape.\ Similar ideas allow to tackle also the case $2n<m\leq3n,$
which implies that $m\in C\left(  n\right)  $ if and only if $m$ is not a
prime or a double of a prime. The latter fact led us to the following statement.

\begin{theorem}
\label{t2} Let $n\geq4$ and $2n<m\leq3n.$ If $m$ is a prime or a double of a
prime, then%
\[
\psi_{n}(m)=\sqrt{m+2\sqrt{2\lfloor{m/3}\rfloor}}\text{;}%
\]
otherwise $\psi_{n}\left(  m\right)  =\sqrt{m}$.
\end{theorem}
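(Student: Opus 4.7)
My plan is to mirror the proof strategy of Theorem~\ref{t1}: construct an explicit low-rank matrix for the upper bound, and obtain the matching lower bound by combining the spectral inequality
\[
\|A\|_{*}^{2}=m+2\sum_{i<j}\sigma_{i}\sigma_{j}\ge m+2\sqrt{e_{2}(\sigma^{2})},\qquad e_{2}(\sigma^{2}):=\sum_{i<j}\sigma_{i}^{2}\sigma_{j}^{2},
\]
with a combinatorial lower bound on $e_{2}(\sigma^{2})$. Since the singular values of the second compound matrix $C_{2}(A)$ are the products $\sigma_{i}\sigma_{j}$ for $i<j$, one has $e_{2}(\sigma^{2})=\|C_{2}(A)\|_{F}^{2}=\sum_{|I|=|J|=2}\det(A_{I,J})^{2}$, i.e.\ the number of $2\times 2$ submatrices of $A$ with $|\det|=1$. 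So the whole problem reduces to showing $e_{2}(\sigma^{2})\ge 2\lfloor m/3\rfloor$ for every $A\in\mathbb{Z}_{n}(m)$ of rank $\ge 2$ in the hypothesised range.

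For the upper bound, write $m=3c+r$ with $c=\lfloor m/3\rfloor$; since $m$ is prime or twice a prime with $m\ge 9$, one has $3\nmid m$ and $r\in\{1,2\}$. I would construct $A^{*}\in\mathbb{Z}_{n}(m)$ with three nonzero rows in a ``tower'' pattern: for $r=1$ one row of ones in columns $1,\dots,c+1$ and two rows of ones in columns $1,\dots,c$; for $r=2$ two identical rows of ones in columns $1,\dots,c+1$ and one row of ones in columns $1,\dots,c$. In either case $A^{*}$ has rank $2$, $A^{*}(A^{*})^{T}$ has trace $m$ and its principal $2\times 2$ minors sum to $2c$, so the two nonzero singular values satisfy $\sigma_{1}^{2}+\sigma_{2}^{2}=m$ and $\sigma_{1}^{2}\sigma_{2}^{2}=2c$, giving $\|A^{*}\|_{*}=\sqrt{m+2\sqrt{2c}}$. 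The $n\times n$ frame accommodates $A^{*}$ since $c+1\le n$ and $3\le n$.

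For the lower bound, a short number-theoretic check (using $m>2n$) shows that in our range ``$m$ prime or twice a prime'' is equivalent to $m\notin C(n)$: a factorization $m=ab$ with $2\le a\le b\le n$ forces $a\ge 3$ (else $b=m/2>n$), so $m$ would need a divisor in $[3,\sqrt{m}]$, which fails only for primes and doubles of primes. Thus $\mathbb{Z}_{n}(m)$ contains no rank-$1$ matrix under our hypothesis, and it remains to prove $N(A):=\sum_{|I|=|J|=2}\det(A_{I,J})^{2}\ge 2\lfloor m/3\rfloor$ whenever $\mathrm{rank}(A)\ge 2$. Expanding pairwise, $N(A)=\sum_{i<j}(a_{i}a_{j}-x_{ij}^{2})$ with $a_{i}=|r_{i}|$ the $i$-th row sum and $x_{ij}=|r_{i}\cap r_{j}|$; this motivates a \emph{shifting lemma}: replacing two incomparable rows $r_{i},r_{j}$ by $r_{i}\cup r_{j}$ and $r_{i}\cap r_{j}$ changes $N(A)$ by
\[
-(a_{i}-x_{ij})(a_{j}-x_{ij})-2\sum_{k\ne i,j}(x_{ik}-y_{k})(x_{jk}-y_{k})\le 0,
\]
where $y_{k}=|r_{i}\cap r_{j}\cap r_{k}|$. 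Iterating this (together with its column analogue) reduces $A$ to a staircase matrix whose rows correspond to a partition $a_{1}\ge\cdots\ge a_{k}\ge 1$ of $m$ with $a_{1}\le n$, $k\le n$, not all parts equal, and for which $N(A)$ collapses to $\sum_{i<j}a_{j}(a_{i}-a_{j})$.

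The main obstacle I anticipate is then a partition-level optimization: showing $\sum_{i<j}a_{j}(a_{i}-a_{j})\ge 2\lfloor m/3\rfloor$ for every such non-constant partition under the size constraints, with equality only at $(c+1,c,c)$, $(c+1,c+1,c)$, and their conjugates $(\underbrace{3,\dots,3}_{c},1)$, $(\underbrace{3,\dots,3}_{c},2)$. I plan to argue first that partitions with three or more distinct part-sizes are never extremal (by a smoothing move collapsing three close parts into two while preserving $m$), reducing to two-distinct-parts partitions $(\underbrace{v_{1},\dots,v_{1}}_{s_{1}},\underbrace{v_{2},\dots,v_{2}}_{s_{2}})$, for which the formula simplifies to $s_{1}s_{2}v_{2}(v_{1}-v_{2})$, subject to $s_{1}v_{1}+s_{2}v_{2}=m$, $v_{1}\le n$, and $s_{1}+s_{2}\le n$. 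The density hypothesis $m>2n$ forces $s_{1}+s_{2}\ge 3$, and a short case analysis on $s_{1}+s_{2}$ together with the bound $v_{1}\le n$ pins the minimum at $2c$ on precisely the four listed partitions, completing the argument.
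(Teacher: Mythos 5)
Your route is genuinely different from the paper's, and most of the pieces are correct. The paper proves a structural result (Theorem~\ref{tsm}) showing that whenever $\sqrt{m}<\|A\|_{*}<\sqrt{m-1}+1$, the matrix $A$ must be equivalent to a two-step block of the special shape $\left[\begin{smallmatrix}J_{s,p}&J_{s,1}\\J_{r,p}&0\end{smallmatrix}\right]$; this is forced by excluding small submatrices with $\sigma_{2}\ge1$ or $\sigma_{2}^{2}+\sigma_{3}^{2}\ge1$, and then Proposition~\ref{exa} reduces the remaining analysis to a one-parameter calculation that pins down $s+r=3$. You instead bound $\|A\|_{*}^{2}\ge m+2\sqrt{e_{2}(\sigma^{2})}$, identify $e_{2}(\sigma^{2})$ with $N(A)=\sum_{i<j}(a_{i}a_{j}-x_{ij}^{2})$ via the second compound matrix, run a shifting argument to reduce to step matrices where $N$ collapses to $\sum_{i<j}a_{j}(a_{i}-a_{j})$, and then aim for a purely partition-theoretic minimum. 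Your spectral reduction and the shifting lemma are correct (I verified the change $-(a_{i}-x_{ij})(a_{j}-x_{ij})-2\sum_{k\ne i,j}(x_{ik}-y_{k})(x_{jk}-y_{k})\le0$), and your rank-$1$ criterion ``$m\notin C(n)\iff m$ prime or $2\times$prime'' is right in the window $2n<m\le 3n$. What each approach buys: yours is more conceptual and produces the sharp lower bound $N(A)\ge2\lfloor m/3\rfloor$ as a self-contained combinatorial fact, whereas the paper's Theorem~\ref{tsm} short-circuits the global optimization by constraining the minimizer to a tiny family \emph{before} any counting.

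The genuine gap is the partition-level optimization. Having shifted to a step matrix, you must prove $\sum_{i<j}a_{j}(a_{i}-a_{j})\ge2\lfloor m/3\rfloor$ for every non-constant partition $a_{1}\ge\cdots\ge a_{t}\ge1$ of $m$ with $a_{1}\le n$ and $t\le n$, together with the stated equality set. This is the heart of the argument, and it is only sketched. Two specific issues: (i) the ``smoothing move collapsing three close parts into two'' is asserted but not verified — the change in $N$ under a unit transfer $a_{k}\mapsto a_{k}-1$, $a_{l}\mapsto a_{l}+1$ is $(2k-1)a_{k}-(2l-1)a_{l}+O(1)$ and does not have a fixed sign, so the reduction to two distinct part-sizes needs a carefully chosen move and a termination argument that also respects $a_{1}\le n$ and $t\le n$; (ii) even in the two-part case $N=s_{1}s_{2}v_{2}(v_{1}-v_{2})$ with $s_{1}v_{1}+s_{2}v_{2}=m$, $v_{1}\le n$, the minimum over all admissible $(s_{1},s_{2},v_{1},v_{2})$ with $s_{1}+s_{2}\ge3$ requires a case split on $s_{1}+s_{2}$ and on $m\bmod 3$ (and, for each $s_{1}+s_{2}$, on the residue class forced by integrality of $v_{1}$), using $v_{1}\le n$ crucially near the ``far end'' of the parabola — the calculation for $s_{1}+s_{2}=3$ does yield $2\lfloor m/3\rfloor$, but the cases $s_{1}+s_{2}\ge4$ are not dispatched. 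Until these two steps are written out, the lower-bound half of your proof is incomplete, whereas the paper's structural theorem avoids the global optimization entirely and makes the remaining analysis a short computation with Propositions~\ref{exa} and~\ref{pro1}.
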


Let us also add that in the proofs of Theorems \ref{t1} and \ref{t2} we
effectively determine the matrices $A$ with $\left\Vert A\right\Vert _{\ast
}=\psi_{n}(m)$.

Next, we consider the case $3n<m\leq4n,$ which is by far more difficult
than\ the previous two. Our result is not as clear-cut as before, for reasons
explained below.

\begin{theorem}
\label{t3}If $3n<m\leq4n$, then%
\begin{equation}
\psi_{n}(m)\leq\sqrt{m+2\sqrt{m-2}}\text{.} \label{mb}%
\end{equation}
If $n\geq6,$ equality holds in (\ref{mb}) if and only if there exists a
positive integer $k$ such that one of the following conditions is met:

(a) $m=12k+2$ and $4k+1,$ $6k+1,$ $12k+1$ are primes;

(b) $m=12k-2$ and $4k-1,$ $6k-1,$ $12k-1$ are primes.
\end{theorem}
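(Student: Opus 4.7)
The plan is to establish the upper bound~(\ref{mb}) first and then the equality characterization for $n \ge 6$, both directions separately.

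For the upper bound, I would exhibit an explicit rank-$2$ construction. Write $m = 4a + r$ with $r \in \{0, 1, 2, 3\}$. Since $3n < m \le 4n$ forces $n \ge 4$ and $a + 1 \le n$ when $r > 0$, one can place inside $\mathbb{Z}_n(m)$ the matrix $A$ whose first $a$ rows equal $(1, 1, 1, 1, 0, \ldots, 0)$, whose $(a+1)$-st row (if $r > 0$) is any $(0,1)$-vector of weight $r$ supported on the first four columns, and whose remaining rows vanish. A direct block computation shows that $A$ has rank at most $2$ with $\sigma_1^2 \sigma_2^2 = a \cdot r(4-r)$; checking cases $r \in \{0, 1, 2, 3\}$ yields $a \cdot r(4-r) \le m - 2$, with equality exactly when $r = 2$. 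Hence $\|A\|_*^2 \le m + 2\sqrt{m-2}$, proving~(\ref{mb}).

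For the ($\Leftarrow$) direction of the characterization, assume~(a) with $m = 12k+2$ (case~(b) is symmetric), and show that every $A \in \mathbb{Z}_n(m)$ satisfies $\|A\|_* \ge \sqrt{m+2\sqrt{m-2}}$ by a case split on $\mathrm{rank}(A)$. Rank~$1$ would force $m \in C(n)$; since $m = 2(6k+1)$ with $6k+1$ prime, the only admissible factorization is $2 \cdot (6k+1)$, which requires $6k+1 \le n$, contradicting $n < m/3 < 6k+1$. For rank~$2$, invoke the classification that, up to row and column permutation, the columns of $A$ lie in $\{0, u, v, u+v\}$ for some disjoint $(0,1)$-vectors $u, v$ of weights $p, q$ with multiplicities $x_0, x_1, x_2, x_3$; then
\[
\sigma_1^2 \sigma_2^2 = pq\bigl((x_1+x_3)(x_2+x_3) - x_3^2\bigr), \qquad m = p(x_1+x_3) + q(x_2+x_3).
\]
A finite Diophantine analysis of this system, constrained by $p + q \le n$ and $x_1 + x_2 + x_3 \le n$ and powered by the primality of $4k+1$, $6k+1$, and $12k+1$, shows that every admissible tuple forces $\sigma_1^2\sigma_2^2 \ge m-2$. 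For $\mathrm{rank}(A) \ge 3$, combine the elementary inequality $\bigl(\sum_{i<j}\sigma_i\sigma_j\bigr)^2 \ge \sum_{i<j}\sigma_i^2\sigma_j^2$ with the Cauchy--Binet identity that $\sum_{i<j}\sigma_i^2\sigma_j^2$ equals the number of nonzero $2 \times 2$ minors of $A$, and bound this count from below combinatorially in the target range.

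For the ($\Rightarrow$) direction, suppose neither~(a) nor~(b) holds and build a witness $A \in \mathbb{Z}_n(m)$ with $\|A\|_* < \sqrt{m+2\sqrt{m-2}}$, case-splitting on which arithmetic condition fails. If $m \not\equiv \pm 2 \pmod{12}$, divisibility of $m$ by $3$ or $4$ produces a factorization placing $m$ in $C(n)$, so the corresponding rank-$1$ matrix satisfies $\|A\|_* = \sqrt m < \sqrt{m+2\sqrt{m-2}}$. Otherwise $m = 12k \pm 2$ and one of $4k \pm 1$, $6k \pm 1$, $12k \pm 1$ is composite; each composite factorization translates into a rank-$2$ construction with $\sigma_1^2\sigma_2^2 < m - 2$. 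For instance, if $12k + 1 = st$ with $s, t \ge 2$, the $(s+1) \times t$ matrix whose first $s$ rows are all-ones and whose $(s+1)$-st row has a single $1$ in column~$1$ gives $\sigma_1^2\sigma_2^2 = s(t-1) = m - 1 - s < m - 2$, and fits in $\mathbb{Z}_n(m)$ whenever $\max(s+1, t) \le n$; analogous divisor-based constructions handle composite $6k+1$ or $4k+1$, and symmetrically for~(b).

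The hardest step is the rank-$2$ infeasibility argument in~($\Leftarrow$): excluding every parameter tuple $(p, q, x_1, x_2, x_3)$ that both fits into $n \times n$ and satisfies $\sigma_1^2\sigma_2^2 < m - 2$. Each such candidate encodes a nontrivial factorization of one of $m-1$, $m-2$, $m/2$, or a closely related quantity into parts bounded by $n$, and the three primality hypotheses in~(a) or~(b) are precisely what is needed to rule out all such factorizations simultaneously. This Diophantine bookkeeping is the crux of the proof and where the delicate number-theoretic flavor of the statement enters.
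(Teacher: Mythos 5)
Your overall architecture (upper bound via a rank-$2$ witness; two directions for the equality characterization) matches the paper, and the witness constructions in both the upper bound and the ($\Rightarrow$) direction are essentially the paper's. But the heart of the ($\Leftarrow$) direction diverges from the paper and, as sketched, has a real gap.

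For ($\Leftarrow$), the paper does not split on $\mathrm{rank}(A)$. It first proves a single structural result (Theorem~\ref{tsm}): any $(0,1)$-matrix with $\sqrt{m}<\|A\|_*<\sqrt{m-1}+1$ is equivalent to a two-step block matrix $\left[\begin{smallmatrix}J_{s,p}&J_{s,1}\\ J_{r,p}&0\end{smallmatrix}\right]$. Combined with Corollary~\ref{cor1} (which shows every minimizer lies below $\sqrt{m-1}+1$), this uniformly handles all ranks at once and reduces the Diophantine bookkeeping to the single parameter family $m=p(r+s)+s$. Your proposal instead needs two separate arguments. The rank-$2$ classification ``columns in $\{0,u,v,u+v\}$'' is plausible but itself requires a proof you do not give, and the resulting Diophantine system in five parameters $(p,q,x_1,x_2,x_3)$ is considerably broader than the two-parameter family the paper ends up with; you assert that the primality hypotheses close all cases without carrying out the analysis. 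The more serious gap is your rank-$\ge 3$ argument. The Cauchy--Binet step gives $\|A\|_*^2\ge m+2\sqrt{N}$ with $N$ the number of nonzero $2\times 2$ minors, so you would need $N\ge m-2$ for every rank-$\ge 3$ matrix with $3n<m\le 4n$. That inequality is nowhere established, is not an obvious consequence of $\mathrm{rank}(A)\ge 3$ alone, and there is no indication of the ``combinatorial lower bound'' you invoke. The paper avoids this entirely: via Proposition~\ref{pro1} it shows that any forbidden $3\times 3$ or $3\times 4$ submatrix already pushes $\|A\|_*$ past $\sqrt{m-1}+1$, so high rank is excluded wholesale by Theorem~\ref{tsm} rather than estimated. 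You should either prove the rank-$\ge 3$ bound you need or replace that branch with the paper's submatrix-forcing technique.

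Two smaller remarks on ($\Rightarrow$): your divisor-based witness for composite $12k+1=st$ only lives in $\mathbb{Z}_n(m)$ if $\max(s+1,t)\le n$; this is true because $12k+1$ is coprime to $6$, so its least prime factor is at least $5$, giving $t\le (4n)/5<n$, but you should say so. Also, when $m\not\equiv\pm2\pmod{12}$ the relevant congruence is $m\equiv 2\pmod 4$ forced by Proposition~\ref{pro4}-type bounds (otherwise the right side of (\ref{mb}) is not even the best rank-$2$ value); the paper derives $m=4k+2$ first and only then reduces modulo $12$, whereas you jump straight to mod $12$ without justifying why $m\equiv 2\pmod 4$ is necessary.
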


It is not difficult to find examples for which bound (\ref{mb}) can be
improved. For instance, taking $n=7,$ $m=26,$ and letting
\[
A=\left[
\begin{array}
[c]{ccccccc}%
1 & 1 & 1 & 1 & 1 & 1 & 0\\
1 & 1 & 1 & 1 & 1 & 0 & 0\\
1 & 1 & 1 & 1 & 1 & 0 & 0\\
1 & 1 & 1 & 1 & 1 & 0 & 0\\
1 & 1 & 1 & 1 & 1 & 0 & 0\\
0 & 0 & 0 & 0 & 0 & 0 & 0\\
0 & 0 & 0 & 0 & 0 & 0 & 0
\end{array}
\right]  ,
\]
we can show by Proposition \ref{exa} below that%
\[
\left\Vert A\right\Vert _{\ast}=\sqrt{26+2\sqrt{20}}<\sqrt{26+2\sqrt{24}}.
\]
In the light of such examples, the main contribution of Theorem \ref{t3} is to
characterize when bound (\ref{mb}) is exact. Although this characterization is
contingent, it does not discard the possibility that the bound is exact for
infinitely many cases. Indeed, proving or disproving the existence of
infinitely many triples of primes $4k-1,$ $6k-1,$ $12k-1$ or $4k+1,$ $6k+1,$
$12k+1$ seems an intractable problem presently. We have checked by a computer
that equality holds in (\ref{mb}) for every $n\leq1000000$ and some
appropriately chosen $m\in\left[  3n+1,4n\right]  .$ For example, for
$n=1000000$ and $m=3597262$, the numbers
\begin{align*}
1199087 &  =4\ast299772-1,\\
\text{ }1798631 &  =6\ast299772-1,\\
3597263 &  =12\ast299772-1
\end{align*}
are primes, and hence equality holds in (\ref{mb}).\medskip

Finally, looking back at Problem \ref{pr1}, we realize that notwithstanding
the precision of Theorems \ref{t1}-\ref{t3}, they shed no light on the order
of magnitude of $\psi_{n}(m)$ when $m$ is large compared to $n$. Hence, we
feel compelled to give the following simple general bound on $\psi_{n}(m)$,
whose proof is given in Section \ref{ur}.

\begin{proposition}
\label{gb}If $n\geq2$ and $1\leq m\leq n^{2},$ then $\psi_{n}(m)\leq\sqrt
{m}+\sqrt{\left\lceil m/n\right\rceil }/2.$
\end{proposition}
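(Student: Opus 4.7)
The plan is to exhibit, for each admissible pair $(n,m)$, an explicit matrix $A\in\mathbb{Z}_{n}(m)$ of rank at most $2$ whose trace norm meets the claimed bound. Set $k=\lceil m/n\rceil$ and $p=\lfloor m/k\rfloor$, and write $m=pk+s$ with $0\leq s\leq k-1$. Since $m\leq n^{2}$ we have $k\leq n$; moreover, if $s\geq 1$ then $p\leq n-1$, because $p=n$ would force $m=kn$ and hence $s=0$. Define $A$ to consist of a $k\times p$ all-ones block placed in rows $1,\ldots,k$ and columns $1,\ldots,p$, together with $s$ additional ones in column $p+1$, rows $1,\ldots,s$; all other entries are zero. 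Then $A\in\mathbb{Z}_{n}(m)$.

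Next, I would compute $\|A\|_{\ast}$. The rows of $A$ realize only two distinct nonzero patterns, so $A$ has rank at most $2$. The nonzero $k\times k$ block of $AA^{T}$ respects the partition $\{1,\ldots,s\}\cup\{s+1,\ldots,k\}$, and restricting to vectors of the form $(\alpha\mathbf{1}_{s},\beta\mathbf{1}_{k-s})$ reduces it to the $2\times 2$ matrix
\[
\begin{pmatrix}(p+1)s & p(k-s)\\ ps & p(k-s)\end{pmatrix},
\]
whose trace equals $m$ and whose determinant equals $ps(k-s)$. The two nonzero eigenvalues $\lambda_{\pm}$ of $AA^{T}$ thus satisfy $\lambda_{+}+\lambda_{-}=m$ and $\lambda_{+}\lambda_{-}=ps(k-s)$, so
\[
\|A\|_{\ast}^{2}=\bigl(\sqrt{\lambda_{+}}+\sqrt{\lambda_{-}}\bigr)^{2}=m+2\sqrt{ps(k-s)}.
\]

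To finish, I would bound $ps(k-s)$. By the AM--GM inequality $s(k-s)\leq k^{2}/4$, and by construction $p\leq m/k$, so $ps(k-s)\leq mk/4$ and hence $2\sqrt{ps(k-s)}\leq\sqrt{mk}$. Consequently
\[
\|A\|_{\ast}^{2}\leq m+\sqrt{mk}\leq m+\sqrt{mk}+k/4=\bigl(\sqrt{m}+\sqrt{k}/2\bigr)^{2},
\]
which yields $\psi_{n}(m)\leq\|A\|_{\ast}\leq\sqrt{m}+\sqrt{\lceil m/n\rceil}/2$.

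The only real obstacle is choosing the construction. The naive attempt of taking $k-1$ full rows of length $n$ together with one partial row of $m-(k-1)n$ ones fails when $n$ is much larger than $k$: the partial row can then have width close to $n$, which makes the smaller singular value of the resulting rank-$2$ matrix far too large. The resolution is to shrink the ambient width from $n$ to $p+1=\lfloor m/k\rfloor+1$, confining the ``defect'' of $s$ ones to a single column of height at most $k-1$. This is precisely the regime in which $s(k-s)\leq k^{2}/4$ combines with $p\leq m/k$ to produce the factor of $4$ needed to absorb the coefficient $2$ in front of $\sqrt{ps(k-s)}$.
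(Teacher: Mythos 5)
Your proposal is correct and follows essentially the same route as the paper: the same choice of $k=\lceil m/n\rceil$, $p=\lfloor m/k\rfloor$, $s$, the same rank-$2$ step matrix (the paper's $B$ with blocks $J_{s,p},J_{s,1},J_{k-s,p},0_{k-s,1}$), the same identity $\|A\|_{\ast}^{2}=m+2\sqrt{ps(k-s)}$ (which the paper obtains by invoking Proposition~\ref{exa}), and the same final bounding via $s(k-s)\le k^{2}/4$ and $p\le m/k$. The only cosmetic differences are that you verify the fit inside $n\times n$ explicitly and rederive the singular-value formula rather than citing the auxiliary proposition.
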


The rest of the paper is structured as follows: in Section \ref{np} we give
some definitions, prove two auxiliary results, and prove Proposition \ref{gb}.
In Section \ref{prf}, first we state and prove Theorem \ref{tsm}, a general
structural result about $\left(  0,1\right)  $-matrices with minimum trace
norm, and then we use it to carry out the proofs of Theorems \ref{t1},
\ref{t2}, and \ref{t3}.

\section{\label{np}Notation and preliminaries}

We write $A^{\ast}$ for the Hermitian adjoint of a matrix $A$; in particular,
if $A$ is real, then $A^{\ast}=A^{T}$. The \emph{singular values} of a matrix
$A$ are the square roots of the eigenvalues of $AA^{\ast}$; we denote them as
$\sigma_{1}(A),\sigma_{2}(A),\ldots$ indexed in descending order. \ For more
information on singular values the reader is referred to \cite{HoJo91}. In
particular, we use the following handy interlacing property:\emph{ if }%
$B$\emph{ is a submatrix of }$A$\emph{, then }$\sigma_{i}\left(  A\right)
\geq\sigma_{i}\left(  B\right)  $\emph{ for any admissible }$i$ (see
\cite{HoJo91}, p 149.)

Call two matrices \emph{equivalent} if they can be obtained from each other by
a finite sequence of the following operations:

- transpositions,

- permutations of rows or columns,

- insertions/deletions of zero rows or columns.

Clearly, equivalent matrices have the same nonzero singular values and
therefore the same trace norm.

A $\left(  0,1\right)  $-matrix $A=\left[  a_{i,j}\right]  $ is called a
\emph{step matrix} if $a_{i,j}\geq a_{k,l}$ whenever $i\leq k$ and $j\leq l.$
Thus, a step matrix looks like the following one
\[
\left[
\begin{array}
[c]{cccccc}%
1 & \cdots & 1 & 1 & \ldots & 0\\
1 & \cdots & 1 & 0 & \cdots & 0\\
\cdots & \cdots & \cdots & \cdots & \cdots & \cdots\\
1 & 1 & 0 & \cdots & \cdots & 0\\
\cdots & \cdots & \cdots & \cdots & \cdots & \cdots\\
0 & 0 & \cdots & \cdots & \cdots & 0
\end{array}
\right]  .
\]
The number of different rows (and columns) of a step matrix $A$ is called
\emph{the number of steps} of $A$.

Let us note that step matrices arise in many extremal problems about $\left(
0,1\right)  $-matrices, and they are crucial for our study as well.

Finally, we write $0_{p,q}$ and $J_{p,q}$ for the\ all-zeros and all-ones
matrices of size $p\times q$.

\subsection{\label{ur}Some useful tools}

In this subsection we present two cornerstones for our proofs, and prove
Proposition \ref{gb}.

\begin{proposition}
\label{pro1}If $A$ is a complex matrix with $\sigma_{1}\left(  A\right)  \leq
a\leq\left\vert A\right\vert _{2}$, then
\begin{equation}
\left\Vert A\right\Vert _{\ast}\geq\sqrt{\left\vert A\right\vert _{2}%
^{2}-a^{2}}+a. \label{in}%
\end{equation}

\end{proposition}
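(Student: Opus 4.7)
The plan is to work directly with the singular values $\sigma_{1}\ge\sigma_{2}\ge\cdots$ of $A$ and combine two elementary lower bounds on $s=\left\Vert A\right\Vert_{\ast}$. Throughout, let $F=\left\vert A\right\vert_{2}^{2}=\sum_{i}\sigma_{i}^{2}$; the hypotheses give $F\ge a^{2}$ and $\sigma_{i}\le\sigma_{1}\le a$ for every $i$.

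The first bound comes from the constraint that no singular value exceeds $a$: since $\sigma_{i}\le a$ implies $\sigma_{i}^{2}\le a\sigma_{i}$, summing over $i$ yields $F\le as$, hence $s\ge F/a$ (and in fact $s\ge F/\sigma_{1}$). The second bound is Cauchy--Schwarz on the tail of the singular-value sequence: from $(\sum_{i\ge2}\sigma_{i})^{2}\ge\sum_{i\ge2}\sigma_{i}^{2}$, valid because the summands are nonnegative, one obtains
\begin{equation*}
s \;=\; \sigma_{1}+\sum_{i\ge2}\sigma_{i}\;\ge\;\sigma_{1}+\sqrt{F-\sigma_{1}^{2}}.
\end{equation*}

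I would then finish by splitting on whether $F\ge2a^{2}$ or $F<2a^{2}$. If $F\ge2a^{2}$, the first bound gives $s\ge F/a$, and the target $F/a\ge a+\sqrt{F-a^{2}}$ reduces after squaring to the trivial identity $(F-2a^{2})^{2}\ge0$. If $F<2a^{2}$, then $\sqrt{F/2}<a$, and I split again on whether $\sigma_{1}$ lies above or below $\sqrt{F/2}$. When $\sigma_{1}\ge\sqrt{F/2}$, the function $g(x)=x+\sqrt{F-x^{2}}$ is decreasing on $[\sqrt{F/2},\sqrt{F}\,]$, so the second bound yields $s\ge g(\sigma_{1})\ge g(a)=a+\sqrt{F-a^{2}}$. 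When $\sigma_{1}<\sqrt{F/2}$, the sharper form $s\ge F/\sigma_{1}>\sqrt{2F}$ combined with $\sqrt{2F}\ge a+\sqrt{F-a^{2}}$---which, again, is just $(F-2a^{2})^{2}\ge0$---finishes the job.

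The only real subtlety is that neither bound alone suffices: Cauchy--Schwarz is too weak when $\sigma_{1}$ sits well below $a$ (where $g$ is still increasing), while $s\ge F/a$ is too weak when $F$ is close to $a^{2}$. Identifying the pivot $\sigma_{1}=\sqrt{F/2}$ at which to switch between the two bounds is the one substantive step; once this is done, the remaining algebra collapses to the single identity $(F-2a^{2})^{2}\ge0$, and equality in~(\ref{in}) will occur exactly at the boundary $F=2a^{2}$, corresponding to $A$ having just two nonzero singular values $\sigma_{1}=\sigma_{2}=a$.
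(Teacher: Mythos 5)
Your argument is correct and follows essentially the same route as the paper's: both establish the two auxiliary bounds $\left\Vert A\right\Vert_{\ast}\geq\sigma_{1}+\sqrt{\left\vert A\right\vert_{2}^{2}-\sigma_{1}^{2}}$ and $\sigma_{1}\left\Vert A\right\Vert_{\ast}\geq\left\vert A\right\vert_{2}^{2}$, then pivot on whether $\sigma_{1}$ lies above or below $\left\vert A\right\vert_{2}/\sqrt{2}$, exploiting the monotonicity of $x\mapsto x+\sqrt{\left\vert A\right\vert_{2}^{2}-x^{2}}$ on one side and the inequality $2\left\vert A\right\vert_{2}^{2}\geq\bigl(\sqrt{\left\vert A\right\vert_{2}^{2}-a^{2}}+a\bigr)^{2}$ on the other; your extra outer split on $\left\vert A\right\vert_{2}^{2}\geq 2a^{2}$ versus $\left\vert A\right\vert_{2}^{2}<2a^{2}$ is harmless redundancy over the paper's single split. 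Two minor slips: the target $\left\vert A\right\vert_{2}^{2}/a\geq a+\sqrt{\left\vert A\right\vert_{2}^{2}-a^{2}}$ does not square down to $(\left\vert A\right\vert_{2}^{2}-2a^{2})^{2}\geq0$ but to $\left\vert A\right\vert_{2}^{2}\geq2a^{2}$, which is your case hypothesis, so the step still holds; and the closing equality remark is false --- equality in (\ref{in}) already occurs whenever $\sigma_{1}=a$ and $A$ has at most two nonzero singular values (e.g.\ $\sigma_{1}=a$, $\sigma_{2}=\sqrt{\left\vert A\right\vert_{2}^{2}-a^{2}}<a$, rest zero), so $\left\vert A\right\vert_{2}^{2}=2a^{2}$ is not required.
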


\begin{proof}
Let $\sigma_{1},\ldots,\sigma_{n}$ be the \ singular values of $A$. Since
\[
\sigma_{1}^{2}+\cdots+\sigma_{n}^{2}=\mathrm{tr}AA^{\ast}=\left\vert
A\right\vert _{2}^{2},
\]
we see that%
\[
\left(  \left\Vert A\right\Vert _{\ast}-\sigma_{1}\right)  \sigma_{2}%
\geq\sigma_{2}^{2}+\sigma_{3}\sigma_{2}+\cdots+\sigma_{n}\sigma_{2}\geq
\sigma_{2}^{2}+\cdots+\sigma_{n}^{2}=\left\vert A\right\vert _{2}^{2}%
-\sigma_{1}^{2}.
\]
On the other hand, $\sigma_{1}^{2}+\sigma_{2}^{2}\leq\left\vert A\right\vert
_{2}^{2}$, and so
\[
\left(  \left\Vert A\right\Vert _{\ast}-\sigma_{1}\right)  \sqrt{\left\vert
A\right\vert _{2}^{2}-\sigma_{1}^{2}}\geq\left\vert A\right\vert _{2}%
^{2}-\sigma_{1}^{2},,
\]
implying that%
\begin{equation}
\left\Vert A\right\Vert _{\ast}\geq\sigma_{1}+\sqrt{\left\vert A\right\vert
_{2}^{2}-\sigma_{1}{}^{2}}. \label{bin}%
\end{equation}

Next, if $\sigma_{1}\leq\left\vert A\right\vert _{2}/\sqrt{2}$, then%
\[
\sigma_{1}\Vert A\Vert_{\ast}=\sigma_{1}\left(  \sigma_{1}+\cdots+\sigma
_{n}\right)  \geq\sigma_{1}^{2}+\cdots+\sigma_{n}^{2}=\left\vert A\right\vert
_{2}^{2},
\]
and by
\[
\Vert A\Vert_{\ast}^{2}\geq2\left\vert A\right\vert _{2}^{2}=\left(
\sqrt{\left\vert A\right\vert _{2}^{2}-a^{2}}+a\right)  ^{2}+\left(
\sqrt{\left\vert A\right\vert _{2}^{2}-a^{2}}-a\right)  ^{2}\geq\left(
\sqrt{\left\vert A\right\vert _{2}^{2}-a^{2}}+a\right)  ^{2},
\]
inequality (\ref{in}) follows.

On the other hand, the function $f\left(  x\right)  =x+\sqrt{\left\vert
A\right\vert _{2}^{2}-x^{2}}$ is decreasing in $x$ whenever $\left\vert
A\right\vert _{2}/\sqrt{2}<x\leq\left\vert A\right\vert _{2}$. Hence, if
$\sigma_{1}>\left\vert A\right\vert _{2}/\sqrt{2}$, then inequality
(\ref{bin}) implies that
\[
\Vert A\Vert_{\ast}\geq f(\sigma_{1})\geq f\left(  a\right)  =\sqrt{\left\vert
A\right\vert _{2}^{2}-a^{2}}+a
\]
completing the proof of Proposition \ref{pro1}.
\end{proof}

A number of subsequent calculations can be streamlined using the following proposition.\ 

\begin{proposition}
\label{exa} If $A$ is the block matrix%
\[
\left[
\begin{array}
[c]{cc}%
J_{s,p} & J_{s,r}\\
J_{q-s,p} & 0_{q-s,r}%
\end{array}
\right]  ,
\]
then%
\[
\sigma_{2}^{2}\left(  A\right)  =\frac{pq+rs-\sqrt{(pq+rs)^{2}-4(q-s)prs}}{2}%
\]
and
\[
\Vert A\Vert_{\ast}=\sqrt{rs+pq+2\sqrt{prs(q-s)}}.
\]

\end{proposition}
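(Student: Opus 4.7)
The plan is to exploit the fact that $A$ has only two distinct row types---the first $s$ rows are all-ones of length $p+r$, while the remaining $q-s$ rows have $p$ ones followed by $r$ zeros---so $\operatorname{rank}(A)\le 2$, and the whole trace norm is carried by $\sigma_1(A)$ and $\sigma_2(A)$. First I would compute $M:=AA^{\ast}$ directly. Each entry is an inner product of two rows of $A$, and depending on whether each row is ``full'' or ``short'' it takes one of three values, yielding the $q\times q$ block form
\[
M=\begin{bmatrix}(p+r)\,J_{s,s} & p\,J_{s,q-s}\\ p\,J_{q-s,s} & p\,J_{q-s,q-s}\end{bmatrix}.
\]

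Since $M$ has rank at most $2$, its nonzero eigenvalues $\sigma_1^{2}\ge\sigma_2^{2}$ are precisely the roots of the quadratic $t^{2}-e_1 t+e_2=0$, where $e_1=\operatorname{tr}M$ and $e_2$ is the sum of all $2\times 2$ principal minors of $M$ (the vanishing eigenvalues contribute nothing to the second elementary symmetric function). A direct count gives $e_1=s(p+r)+(q-s)p=pq+rs$. For $e_2$, observe that any $2\times 2$ principal minor indexed by two rows of the same type comes from a constant $2\times 2$ block and is therefore zero; the only nonzero contributions come from pairs $\{i,j\}$ with $i\le s<j$, of which there are $s(q-s)$, and each such minor equals $(p+r)p-p^{2}=pr$. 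Hence $e_2=prs(q-s)$.

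Solving $t^{2}-(pq+rs)t+prs(q-s)=0$ and selecting the smaller root yields the claimed expression for $\sigma_2^{2}(A)$. The trace-norm formula then follows from
\[
\|A\|_{\ast}^{2}=(\sigma_1+\sigma_2)^{2}=\sigma_1^{2}+\sigma_2^{2}+2\sigma_1\sigma_2=(pq+rs)+2\sqrt{prs(q-s)},
\]
using $\sigma_1^{2}\sigma_2^{2}=e_2$. There is no genuine obstacle: the only step requiring a little care is the bookkeeping for the principal minors of $M$, namely recognizing that the constant-block structure forces all ``same-type'' $2\times 2$ minors to vanish so that $e_2$ reduces to a single product. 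The degenerate cases ($p=0$, $r=0$, $s=0$, or $s=q$) cause no trouble because the right-hand side of the trace-norm formula continues to give the correct rank-one (or zero) value.
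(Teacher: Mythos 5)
Your proof is correct and follows essentially the same route as the paper: compute $M=AA^{\ast}$ (you get the same block matrix), identify the two nonzero eigenvalues of $M$, and assemble $\Vert A\Vert_{\ast}^{2}=\sigma_1^{2}+\sigma_2^{2}+2\sigma_1\sigma_2$. The one place where you do something a bit different is in how you produce the quadratic $t^{2}-(pq+rs)t+prs(q-s)$: the paper simply asserts the characteristic polynomial $\phi_{M}(x)=(-1)^{q}x^{q-2}\bigl(x^{2}-(pq+rs)x+prs(q-s)\bigr)$, whereas you derive its nonconstant factor from the rank-$\le 2$ structure by computing $e_1=\operatorname{tr}M$ and $e_2=$ the sum of the $2\times 2$ principal minors, noting that the constant diagonal blocks kill all same-type minors so only the $s(q-s)$ cross pairs contribute $pr$ each. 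That bookkeeping is clean and correct, and arguably more transparent than an unexplained determinant expansion; otherwise the two arguments are the same.
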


\begin{proof}
It is not hard to see that
\[
AA^{\ast}=\left[
\begin{array}
[c]{cc}%
(p+r)J_{s,s} & pJ_{s,\left(  q-s\right)  }\\
pJ_{\left(  q-s\right)  ,s} & pJ_{\left(  q-s\right)  ,\left(  q-s\right)  }%
\end{array}
\right]  .
\]
Hence, the characteristic polynomial $\phi_{AA^{\ast}}(x)$ of $AA^{\ast}$ is
\[
\phi_{AA^{\ast}}(x)=(-1)^{q}x^{q-2}\left(  x^{2}-(pq+rs)x+prs(q-s)\right)  ,
\]
and consequently the singular values of $A$ satisfy
\begin{align*}
\sigma_{1}^{2}\left(  A\right)   &  =\frac{pq+rs+\sqrt{(pq+rs)^{2}-4(q-s)prs}%
}{2},\\
\sigma_{2}^{2}\left(  A\right)   &  =\frac{pq+rs-\sqrt{(pq+rs)^{2}-4(q-s)prs}%
}{2},\\
\sigma_{3}\left(  A\right)   &  =\cdots=\sigma_{q}\left(  A\right)  =0.
\end{align*}
Therefore,
\begin{align*}
\Vert A\Vert_{\ast}  &  =\sqrt{\sigma_{1}^{2}\left(  A\right)  +\sigma_{2}%
^{2}\left(  A\right)  +2\sigma_{1}\left(  A\right)  \sigma_{2}\left(
A\right)  }\\
&  =\sqrt{pq+rs+2\sqrt{prs(q-s)}},
\end{align*}
as claimed.
\end{proof}

As an immediate application of Proposition \ref{exa}, we prove Proposition
\ref{gb}

\begin{proof}
[\textbf{Proof of Proposition \ref{gb}}]Let $k=\left\lceil m/n\right\rceil $
and $p=\left\lfloor m/k\right\rfloor $; hence $m=kp+s,$ $0\leq s\leq k-1$. If
$s=0,$ then $\psi_{n}(m)=\sqrt{m},$ so suppose that $s>0.$ Obviously the
matrix%
\[
B=\left[
\begin{array}
[c]{cc}%
J_{s,p} & J_{s,1}\\
J_{k-s,p} & 0_{k-s,1}%
\end{array}
\right]
\]
can be completed by zero rows and columns to a matrix in $\mathbb{Z}%
_{n}\left(  m\right)  $; hence, Proposition \ref{exa} implies that
\begin{align*}
\psi_{n}(m)  &  \leq\left\Vert B\right\Vert _{\ast}=\sqrt{m+2\sqrt{ps\left(
k-s\right)  }}\leq\sqrt{m+\sqrt{pk^{2}}}\\
&  <\sqrt{m+\sqrt{m\left\lceil m/n\right\rceil }}<\sqrt{m}+\sqrt{\left\lceil
m/n\right\rceil }/2,
\end{align*}
as claimed.
\end{proof}

\section{\label{prf}Proofs}

The proofs of Theorems \ref{t1}, \ref{t2} and \ref{t3} are essentially deduced
from a general statement given in Theorem \ref{tsm} below. Its proof is
somewhat involved, and in fact many calculations could have been spared had we
readily used MATLAB or a similar package. However, except on one occasion, we
need exact values, so to avoid any doubt of rounding errors we give explicit
calculations that can be verified directly.

\begin{theorem}
\label{tsm}Let $A$ be a $\left(  0,1\right)  $-matrix with $m$ ones. If
\[
\sqrt{m}<\left\Vert A\right\Vert _{\ast}<\sqrt{m-1}+1,
\]
then $A$ is equivalent to a block matrix of the type
\[
\left[
\begin{array}
[c]{cc}%
J_{s,p} & J_{s,1}\\
J_{r,p} & 0_{r,1}%
\end{array}
\right]  .
\]

\end{theorem}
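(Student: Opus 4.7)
The plan is to translate the hypothesis into sharp bounds on the top few singular values of $A$ via (the proof of) Proposition \ref{pro1}, and then convert those bounds into combinatorial restrictions on $A$ through singular-value interlacing.

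First, the bound $\|A\|_{*} \geq \sigma_{1} + \sqrt{m - \sigma_{1}^{2}}$ (derived in the proof of Proposition \ref{pro1}) together with the concavity of $g(x) := x + \sqrt{m - x^{2}}$ and the identity $g(1) = g(\sqrt{m-1}) = 1 + \sqrt{m-1}$ show that $g \geq 1 + \sqrt{m-1}$ precisely on $[1,\sqrt{m-1}]$. Since $\sigma_{1}(A) \geq 1$ for any nonzero $(0,1)$-matrix, the hypothesis forces $\sigma_{1}(A) > \sqrt{m-1}$, hence $\tau^{2} := \sigma_{2}^{2} + \cdots + \sigma_{n}^{2} = m - \sigma_{1}^{2} < 1$ and $\sigma_{2}(A) < 1$. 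Pushing further, the intermediate inequality $(\|A\|_{*} - \sigma_{1})\sigma_{2} \geq \tau^{2}$ in the same proof, combined with $\|A\|_{*} - \sigma_{1} < 1$, yields $\tau^{2} < \sigma_{2}$, whence
\[
\sigma_{3}(A)^{2} \leq \tau^{2} - \sigma_{2}^{2} < \sigma_{2}(1 - \sigma_{2}) \leq \tfrac{1}{4},
\]
so $\sigma_{3}(A) < 1/2$.

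Next I would convert these analytic bounds into structure via interlacing. Since $\sigma_{2}(I_{2}) = 1$, the inequality $\sigma_{2}(A) < 1$ rules out any $2 \times 2$ identity submatrix in $A$, so any two rows (resp.\ columns) of $A$ are comparable coordinatewise and $A$ is equivalent to a step matrix. The $3 \times 3$ staircase $S_{3}$ with rows $(1,1,1),(1,1,0),(1,0,0)$ has $\sigma_{3}(S_{3}) > 1/2$: indeed, the characteristic polynomial of $S_{3}S_{3}^{T}$ is $\lambda^{3} - 6\lambda^{2} + 5\lambda - 1$, whose value at $\lambda = 1/4$ is $-7/64 < 0$, so its smallest root exceeds $1/4$. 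Any step matrix having three distinct positive row sums $r_{1} > r_{2} > r_{3}$ contains $S_{3}$ as the submatrix on those rows and on the columns $1$, $r_{3}+1$, $r_{2}+1$; by interlacing this would contradict $\sigma_{3}(A) < 1/2$. Hence $A$ has at most two distinct positive row sums, and after deleting zero rows and columns it is equivalent to
\[
B = \left[\begin{array}{cc} J_{s,p} & J_{s,t} \\ J_{r,p} & 0_{r,t} \end{array}\right]
\]
with $p, s, r, t \geq 1$ (any of $s = 0$, $r = 0$, $p = 0$, $t = 0$ reduces $A$ to rank one and contradicts $\|A\|_{*} > \sqrt{m}$).

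The last step is to show that $t = 1$ or $r = 1$. Proposition \ref{exa} gives $\|B\|_{*} = \sqrt{m + 2\sqrt{ptsr}}$ with $m = ps + pr + ts$, so the hypothesis reduces to $f(p,s,t,r) := ptsr - ps - pr - ts + 1 < 0$. On the region $p, s \geq 1$, $t, r \geq 2$ the four partial derivatives $\partial_{p}f = s(tr-1) - r$, $\partial_{s}f = p(tr-1) - t$, $\partial_{t}f = s(pr-1)$, $\partial_{r}f = p(ts-1)$ are each strictly positive, so $f$ attains its minimum there at $(1,1,2,2)$ with value $0$; therefore $f \geq 0$ throughout that region, contradicting the strict inequality $f < 0$. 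So either $t = 1$ ($B$ is then directly of the stated form) or $r = 1$ (the transpose of $B$ is then of the stated form, with parameters $(p,s,t)$ playing the roles of $(s,p,r)$). The main obstacle I foresee is the sharp step $\sigma_{3}(A) < 1/2 \Rightarrow$ two-row-type structure: since $\sigma_{3}(S_{3})$ only narrowly exceeds $1/2$, the chain of inequalities producing $\sigma_{3}(A) < 1/2$ must be executed without slack.
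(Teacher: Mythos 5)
Your proof is correct and takes a genuinely different route from the paper's.

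Both arguments hinge on the same engine, namely inequality (\ref{bin}) from the proof of Proposition~\ref{pro1} together with singular-value interlacing, but they use it differently. The paper works ``forward'': it tests a short list of forbidden submatrices ($X_1,X_2,X_3$ via $\sigma_2\geq 1$, and $X_4,X_5$ via $\sigma_2^2+\sigma_3^2>1$), each time concluding $\sigma_1^2\leq m-1$ and reapplying Proposition~\ref{pro1}; the residual $3\times 3$ case (the matrix $B$ with rows $(1,1,1),(1,1,0),(1,0,0)$) is even handled by a MATLAB computation. You instead invert the inequality once and for all: the hypothesis forces $\sigma_1>\sqrt{m-1}$, hence $\tau^2=m-\sigma_1^2<1$ and $\sigma_2<1$, and then the intermediate bound $(\|A\|_*-\sigma_1)\sigma_2\geq\tau^2$ with $\|A\|_*-\sigma_1<1$ gives $\sigma_3^2\leq\tau^2-\sigma_2^2<\sigma_2(1-\sigma_2)\leq 1/4$. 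The payoff is a uniform obstruction: the single $3\times3$ staircase $S_3$ has $\sigma_3(S_3)>1/2$ (the characteristic polynomial $\lambda^3-6\lambda^2+5\lambda-1$ of $S_3S_3^T$ is negative at $1/4$, so its smallest root exceeds $1/4$), and $S_3$ embeds into every step matrix with three or more steps via columns $1$, $r_3+1$, $r_2+1$. This collapses the paper's case analysis of $X_4$, $X_5$, and the $3\times3$ matrix into one line and removes the need for a numerical check. For the final reduction you replace the paper's observation that $X_3$ sits inside a two-step matrix with $r\geq 2$ and $t\geq 2$ by a clean calculus verification that $f(p,s,t,r)=ptsr-ps-pr-ts+1\geq 0$ on $p,s\geq 1$, $t,r\geq 2$ (monotone in each variable, value $0$ at $(1,1,2,2)$), contradicting the hypothesis $\sqrt{ptsr}<\sqrt{m-1}$. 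Both closings are valid; the paper's $X_3$ argument is a touch shorter, but your version makes explicit exactly how tight the two-step configuration is. Overall your organization---first extract all singular-value bounds from the hypothesis, then convert them into forbidden submatrices---is cleaner and fully rigorous.
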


\begin{proof}
The main idea of the proof is that $A$ cannot contain a submatrix $X$ with
$\sigma_{2}\left(  X\right)  \geq1$ or $\sigma_{2}^{2}\left(  X\right)
+\sigma_{3}^{2}\left(  X\right)  \geq1,$ because then $\sigma_{2}\left(
A\right)  \geq1$ or $\sigma_{2}^{2}\left(  A\right)  +\sigma_{3}^{2}\left(
A\right)  \geq1$, and so $\sigma_{1}^{2}\left(  A\right)  \leq m-1.$ Hence,
Proposition \ref{pro1} implies that $\left\Vert A\right\Vert _{\ast}\geq
\sqrt{m-1}+1.$

First, note that $A$ does not contain the matrices
\[
X_{1}=\left[
\begin{array}
[c]{cc}%
1 & 0\\
0 & 1
\end{array}
\right]  \text{ \ \ \ or \ \ \ }X_{2}=\text{\ }\left[
\begin{array}
[c]{cc}%
0 & 1\\
1 & 0
\end{array}
\right]  ,
\]
because $\sigma_{2}\left(  X_{1}\right)  =\sigma_{2}\left(  X_{2}\right)  =1$.

Next, delete the zero rows and columns of $A,$ and let $t\times q$ be the size
of the resulting matrix $A^{\prime}=[a_{i,j}^{\prime}]$. Given two vectors
$\mathbf{x}=\left(  x_{1},\ldots,x_{n}\right)  $ and \ $\mathbf{y}=\left(
y_{1},\ldots,y_{n}\right)  ,$ write $\mathbf{x}\succ\mathbf{y}$ if $x_{i}\geq
y_{i}$ for all $i\in\left[  n\right]  .$

Let $\mathbf{r}_{1},\ldots,\mathbf{r}_{t}$ be the row vectors of $A^{\prime}$.
It is not hard to see that if $1\leq i<j\leq t,$ then either $\mathbf{r}%
_{i}\succ\mathbf{r}_{j}$ or $\mathbf{r}_{j}\succ\mathbf{r}_{i}$, for otherwise
$A^{\prime}$ contains either $X_{1}$\ or $X_{2}$. Hence, we can permute the
rows of $A^{\prime}$ so that $\mathbf{r}_{1}\succ\cdots\succ\mathbf{r}_{t}$.
Applying the same argument to the columns of $A^{\prime},$ we can ensure that
$a_{i,j}^{\prime}\geq a_{k,l}^{^{\prime}}$ whenever $i\leq k$ and $j\leq l$.
Therefore $A$ is equivalent to the step matrix $A^{\prime}$.

Since for $t\leq2$ or $q\leq2$ the statement is obvious, we assume that
$t\geq3$ and $q\geq3$. Suppose first that $q\geq4$ and that $A^{\prime}$ has
at least three steps. Then $A$ must contain a matrix of the type%
\[
X=\left[
\begin{array}
[c]{cccc}%
1 & 1 & 1 & 1\\
1 & a & b & 0\\
1 & c & 0 & 0
\end{array}
\right]  ,
\]
where $a,b,c$ can be zero or one. If $b=0,$ then $A^{\prime}$ contains the
matrix%
\[
X_{3}=\left[
\begin{array}
[c]{ccc}%
1 & 1 & 1\\
1 & 0 & 0\\
1 & 0 & 0
\end{array}
\right]  ,
\]
for which Proposition \ref{exa} gives $\sigma_{2}\left(  X_{3}\right)  =1$, a
contradiction. Therefore, $b=1$ and $A^{\prime}$ contains one of the matrices
\[
X_{4}=\left[
\begin{array}
[c]{cccc}%
1 & 1 & 1 & 1\\
1 & 1 & 1 & 0\\
1 & 1 & 0 & 0
\end{array}
\right]  \text{ \ \ or \ \ }X_{5}=\left[
\begin{array}
[c]{cccc}%
1 & 1 & 1 & 1\\
1 & 1 & 1 & 0\\
1 & 0 & 0 & 0
\end{array}
\right]  .
\]
We shall prove that $\sigma_{2}^{2}\left(  X_{4}\right)  +\sigma_{3}%
^{2}\left(  X_{4}\right)  >1$ and $\sigma_{2}^{2}\left(  X_{5}\right)
+\sigma_{3}^{2}\left(  X_{5}\right)  >1.$ Indeed,
\[
X_{4}X_{4}^{\ast}=\left[
\begin{array}
[c]{ccc}%
4 & 3 & 2\\
3 & 3 & 2\\
2 & 2 & 2
\end{array}
\right]  ,
\]
and the characteristic polynomial of $X_{4}X_{4}^{\ast}$ is
\[
\phi_{X_{4}X_{4}^{\ast}}\left(  x\right)  =x^{3}-9x^{2}+9x-2=\left(
x-8\right)  \left(  x^{2}-x+1\right)  +6.
\]
Hence $\phi_{X_{4}X_{4}^{\ast}}\left(  x\right)  >0$ if $x\geq8,$ and so
$\sigma_{1}^{2}\left(  X_{4}\right)  <8.$ Therefore,
\[
\sigma_{2}^{2}\left(  X_{4}\right)  +\sigma_{3}^{2}\left(  X_{4}\right)
=\mathrm{tr}X_{4}X_{4}^{\ast}-\sigma_{1}^{2}\left(  X_{4}\right)  >9-8=1,
\]
a contradiction.

Likewise, we see that
\[
X_{5}X_{5}^{\ast}=\left[
\begin{array}
[c]{ccc}%
4 & 3 & 1\\
3 & 3 & 1\\
1 & 1 & 1
\end{array}
\right]
\]
and the characteristic polynomial of $X_{5}X_{5}^{\ast}$ is
\[
\phi_{X_{5}X_{5}^{\ast}}\left(  x\right)  =x^{3}-8x^{2}+8x-2=\left(
x-7\right)  \left(  x^{2}-x+1\right)  +5.
\]
Hence $\phi_{X_{5}X_{5}^{\ast}}\left(  x\right)  >0$ if $x\geq7$, and so
$\sigma_{1}^{2}\left(  X_{5}\right)  <7$. Therefore,%
\[
\sigma_{2}^{2}\left(  X_{5}\right)  +\sigma_{3}^{2}\left(  X_{5}\right)
=\mathrm{tr}X_{5}X_{5}^{\ast}-\sigma_{1}^{2}\left(  X_{5}\right)  >8-7=1,
\]
a contradiction.

Thus, we have proved that $A^{\prime}$ is always a step matrix with at most
two steps, except if $q=3.$ The same argument applies if $t\geq4$. so it
remains the case $t=3$ and $q=3.$ But in that case the only matrix with three
steps is
\[
B=\left[
\begin{array}
[c]{ccc}%
1 & 1 & 1\\
1 & 1 & 0\\
1 & 0 & 0
\end{array}
\right]  ,
\]
and using MATLAB, one can see that $\left\Vert B\right\Vert _{\ast}>\sqrt
{5}+1$, contrary to the premises of the theorem.

Therefore, $A^{\prime}$ has at most two steps, that is to say, it has the
block form
\[
A^{\prime}=\left[
\begin{array}
[c]{cc}%
J_{s,p} & J_{s,k}\\
J_{r,p} & 0_{r,k}%
\end{array}
\right]  .
\]
Clearly, $k\geq1$ and $r\geq1$, as otherwise $\left\Vert A^{\prime}\right\Vert
_{\ast}=\sqrt{m}$, contrary to the premise $\left\Vert A^{\prime}\right\Vert
_{\ast}>\sqrt{m}$.

Finally, if $r\geq2$ and $k\geq2$, then $A^{\prime}$ contains the matrix
$X_{3},$ which is a contradiction since $\sigma_{2}\left(  X_{3}\right)  =1$.
Therefore, either $k=1$ or $r=1,$ completing the proof of Theorem \ref{tsm}.
\end{proof}

\subsection{Proof of Theorem \ref{t1}}

\begin{proof}
Let $n\geq2$ and $n<m\leq2n.$ If $m$ is not a prime number, then $m=ab$ for
some integers $a\geq2$ and $b\geq2.$ Hence $a\leq n$ and $b$ $\leq n$;
therefore, $\mathbb{Z}_{n}\left(  m\right)  $ contains a matrix of rank $1$
and $\psi_{n}\left(  m\right)  =\sqrt{m}.$

Suppose now that $m$ is a prime, which must be odd, because $m>n\geq2.$ \ Let
$m=2k+1$ and apply Proposition \ref{exa} to the matrix
\[
B=\left[
\begin{array}
[c]{cc}%
J_{1,k} & 1\\
J_{1,k} & 0
\end{array}
\right]  .
\]
We get $\left\Vert B\right\Vert _{\ast}=\sqrt{m+\sqrt{2\left(  m-1\right)  }%
}.$ Since $B$ can be extended to an $n\times n$ matrix by addition of zero
rows and columns, we see that
\begin{equation}
\psi_{n}\left(  m\right)  \leq\sqrt{m+\sqrt{2\left(  m-1\right)  }}.
\label{ub}%
\end{equation}

Let $A\in\mathbb{Z}_{n}\left(  m\right)  $ be a matrix with $\left\Vert
A\right\Vert _{\ast}=\psi_{n}\left(  m\right)  $. We complete the proof by
showing that $A$ is equivalent to $B.$ Since
\[
\sqrt{m+\sqrt{2\left(  m-1\right)  }}<\sqrt{m-1}+1,
\]
Theorem \ref{tsm} implies that $A$ is equivalent to a matrix
\[
A^{\prime}=\left[
\begin{array}
[c]{cc}%
J_{s,t} & J_{s,1}\\
J_{r,t} & 0_{r,1}%
\end{array}
\right]  .
\]
To finish the proof we show that $r=1,$ and either $s=1$ or $t=1$. Indeed if
$r\geq2,$ then $A$ contains the matrix
\[
Y_{1}=\left[
\begin{array}
[c]{cc}%
1 & 1\\
1 & 0\\
1 & 0
\end{array}
\right]  .
\]
Proposition \ref{exa} implies that
\[
\sigma_{2}^{2}\left(  Y_{1}\right)  =\frac{2+2-\sqrt{(2+2)^{2}-4(1)2}}%
{2}=\sqrt{2-\sqrt{2}}>\frac{1}{2},
\]
and Proposition \ref{pro1} implies that
\[
\left\Vert A\right\Vert _{\ast}\geq\sqrt{m-1/2}+1/\sqrt{2}>\sqrt
{m+\sqrt{2\left(  m-1\right)  }},
\]
contrary to (\ref{ub}). Hence $r=1.$

Finally, if both $s\geq2$ and $t\geq2,$ then $A^{\prime}$ contains the matrix
\[
Y_{2}=\left[
\begin{array}
[c]{ccc}%
1 & 1 & 1\\
1 & 1 & 1\\
1 & 1 & 0
\end{array}
\right]  .
\]
Applying Proposition \ref{exa}, we find that%
\[
\sigma_{2}^{2}\left(  Y_{2}\right)  =\frac{8-\sqrt{64-16}}{2}=4-2\sqrt
{3}>\frac{1}{2}.
\]
Hence, Proposition \ref{pro1} implies that
\[
\left\Vert A\right\Vert _{\ast}=\left\Vert A^{\prime}\right\Vert _{\ast}%
\geq\sqrt{m-1/2}+1/\sqrt{2}>\sqrt{m+\sqrt{2\left(  m-1\right)  }},
\]
contrary to (\ref{ub}). The proof of Theorem \ref{t1} is completed.
\end{proof}

\subsection{Proof of Theorem \ref{t2}}

\begin{proof}
Let $n\geq4$ and $2n<m\leq3n.$ It $m$ has three prime factors $p\leq q\leq r$,
then choosing $a=pq$ and $b=m/a$, we see that $3\leq a\leq n$ and $3\leq b\leq
n$, because $m>8$; hence $\mathbb{Z}_{n}\left(  m\right)  $ contains a matrix
of rank $1$ and $\psi_{n}\left(  m\right)  =\sqrt{m}$.

It $m$ has only two prime factors $p\leq q,$ then choosing $a=p$ and $b=m/a,$
we see that $\mathbb{Z}_{n}\left(  m\right)  $ contains a matrix of rank $1$
and $\psi_{n}\left(  m\right)  =\sqrt{m},$ unless $p=2$, that is to say,
unless $m$ is a double of a prime.

Now, let $m$ be a prime or a double of prime, and suppose that $m=3k+s,$ where
$1\leq s\leq2.$ Applying Proposition \ref{exa} to the matrix
\[
B=\left[
\begin{array}
[c]{cc}%
J_{s,k} & J_{s,1}\\
J_{3-s,k} & 0_{3-s,1}%
\end{array}
\right]  ,
\]
we get
\[
\left\Vert B\right\Vert _{\ast}=\sqrt{m+2\sqrt{2\left(  m-s\right)  /3}}%
=\sqrt{m+2\sqrt{2\left\lfloor m/3\right\rfloor }}.
\]
Since $B$ can be extended to an $n\times n$ matrix by addition of zero rows
and columns, we see that
\begin{equation}
\psi_{n}\left(  m\right)  \leq\sqrt{m+2\sqrt{2\left\lfloor m/3\right\rfloor }%
}. \label{ub2}%
\end{equation}

Let $A\in\mathbb{Z}_{n}\left(  m\right)  $ be a matrix with $\left\Vert
A\right\Vert _{\ast}=\psi_{n}\left(  m\right)  $. We shall show that $A$ is
equivalent to $B.$ Since
\[
\sqrt{m+2\sqrt{2\left\lfloor m/3\right\rfloor }}<\sqrt{m-1}+1,
\]
Theorem \ref{tsm} implies that $A$ is equivalent to a step matrix of the type%
\[
A^{\prime}=\left[
\begin{array}
[c]{cc}%
J_{s,p} & J_{s,1}\\
J_{r,p} & 0_{r,1}%
\end{array}
\right]  .
\]
If $s+r=3,$ we see that $A^{\prime}=B$, proving the theorem.

Assume now that $s+r\geq4$; we shall show that this assumption contradicts
(\ref{ub2}). First, note that $m>2n$ implies that $p\geq2$. If $r\geq2$,
$A^{\prime}$ contains the matrix
\[
Y_{2}=\left[
\begin{array}
[c]{ccc}%
1 & 1 & 1\\
1 & 1 & 1\\
1 & 1 & 0\\
1 & 1 & 0
\end{array}
\right]  ,
\]
and Proposition \ref{exa} implies that
\[
\sigma_{2}^{2}\left(  A\right)  \geq\sigma_{2}^{2}\left(  Y_{2}\right)
=\frac{10-\sqrt{68}}{2}>\frac{2}{3}.
\]
We see that $\sigma_{1}^{2}\leq m-2/3$ and Proposition \ref{pro1} gives
\[
\left\Vert A\right\Vert _{\ast}>\sqrt{m-2/3}+\sqrt{2/3}>\sqrt{m+2\sqrt
{2\left\lfloor m/3\right\rfloor }},
\]
contradicting (\ref{ub2}). Hence, $r=1$, and so
\[
A^{\prime}=\left[
\begin{array}
[c]{cc}%
J_{s,p} & J_{s,1}\\
J_{1,p} & 0
\end{array}
\right]  .
\]
Clearly, we may suppose that $p\geq s,$ because $p$ and $s$ are symmetric
parameters in the shape of $A^{\prime}$.

Next, Proposition \ref{exa} gives
\[
\left\Vert A^{\prime}\right\Vert _{\ast}=\sqrt{m+2\sqrt{\frac{\left(
m-s+1\right)  s}{s+1}}}.
\]
We see that
\begin{equation}
\frac{\left(  m-s+1\right)  s}{s+1}=m+2-s-\frac{m+2}{s+1}, \label{exp}%
\end{equation}
and the derivative of this expression as a function of $s$ is
\[
-1+\frac{m+2}{\left(  s+1\right)  ^{2}}.
\]
Therefore, expression (\ref{exp}) is increasing with $s$, because%
\[
m+2=p\left(  s+1\right)  +s+2\geq s\left(  s+1\right)  +s+2>\left(
s+1\right)  ^{2}.
\]
Hence,%
\[
\frac{\left(  m-s+1\right)  s}{s+1}\geq\frac{\left(  m-2\right)  3}{4}%
>\frac{\left(  m-1\right)  2}{3},
\]
and so,
\[
\left\Vert A^{\prime}\right\Vert _{\ast}>\sqrt{m+2\sqrt{2\left[  m/3\right]
}},
\]
contradicting (\ref{ub2}). Theorem\ \ref{t2} is proved.
\end{proof}

\subsection{Proof of Theorem \ref{t3}}

The starting point of the proof of Theorem \ref{t3} are the following estimates:

\begin{proposition}
\label{pro4}If $m\leq4n,$ then
\[
\psi_{n}\left(  m\right)  \leq\left\{
\begin{array}
[c]{ll}%
\sqrt{m+2\sqrt{3\left(  m-1\right)  /4}}\text{,} & \text{if }m\equiv1\text{
}\left(  \operatorname{mod}4\right)  \text{;}\\
\sqrt{m+2\sqrt{3\left(  m-3\right)  /4}}\text{,} & \text{if }m\equiv3\text{
}\left(  \operatorname{mod}4\right)  \text{;}\\
\sqrt{m+2\sqrt{m-2}}\text{,} & \text{if }m\equiv2\text{ }\left(
\operatorname{mod}4\right)  \text{.}%
\end{array}
\right.
\]

\end{proposition}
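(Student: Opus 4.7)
The plan is to establish each of the three upper bounds by exhibiting, for the appropriate residue class of $m$ modulo $4$, a concrete matrix in $\mathbb{Z}_n(m)$ whose trace norm equals the right-hand side, and then invoking Proposition \ref{exa} to compute that norm. Since $\psi_n(m)$ is defined as a minimum, producing a single such matrix suffices to upper-bound it.

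Write $m = 4p + s$ with $s \in \{1, 2, 3\}$ determined by the residue of $m$, and consider the $4 \times (p+1)$ block matrix
\[
B_s \;=\; \left[ \begin{array}{cc} J_{s,p} & J_{s,1} \\ J_{4-s,p} & 0_{4-s,1} \end{array} \right].
\]
A direct count gives $sp + s + (4-s)p = 4p + s = m$ ones, so $B_s$ has the correct weight. The hypothesis $m \leq 4n$ forces $p + 1 \leq n$, and in the range where the proposition is to be applied (essentially $n \geq 4$, which is automatic when $3n < m \leq 4n$ admits any $m$) the $4 \times (p+1)$ matrix $B_s$ fits inside an $n \times n$ frame by appending zero rows and columns, producing an element of $\mathbb{Z}_n(m)$. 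Hence $\psi_n(m) \leq \|B_s\|_\ast$.

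It remains to read off $\|B_s\|_\ast$ from Proposition \ref{exa} with $(q, r) = (4, 1)$ and the chosen $s$: since $pq + rs = 4p + s = m$, the formula of that proposition yields
\[
\|B_s\|_\ast \;=\; \sqrt{m + 2\sqrt{p\, s\, (4-s)}}.
\]
Substituting the three values of $s$ completes the proof: for $s = 1$ one gets $p s (4-s) = 3p = 3(m-1)/4$; for $s = 2$, $p s (4-s) = 4p = m - 2$; and for $s = 3$, $p s (4-s) = 3p = 3(m-3)/4$. These match the three bounds in the statement exactly.

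No substantive obstacle is expected; the argument is essentially bookkeeping on top of Proposition \ref{exa}. The only real modeling decision is choosing $q = 4$ (the largest row count the constraint $m \leq 4n$ allows) together with $r = 1$ (making the second block column a single column), as this is the decomposition of $m$ into the template $pq + rs$ that produces an expression of shape $m + 2\sqrt{\cdot}$ matching the target bounds.
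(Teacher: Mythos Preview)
Your proposal is correct and follows essentially the same approach as the paper: construct the $4\times(p+1)$ two-step matrix $B_s$ for the appropriate $s\in\{1,2,3\}$, embed it in an $n\times n$ frame, and invoke Proposition~\ref{exa} to read off $\|B_s\|_\ast=\sqrt{m+2\sqrt{ps(4-s)}}$. The paper's proof is identical up to renaming your $p$ as $k$; both proofs leave the side condition $n\geq4$ (needed for the four rows to fit) implicit, to be supplied by the context in which the proposition is actually used.
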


\begin{proof}
Suppose that $m=4k+s,$ where $k$ is an integer and $1\leq s\leq3.$ Then
$k+1\leq n,$ and so the $\left(  0,1\right)  $-matrix
\[
B=\left[
\begin{array}
[c]{cc}%
J_{s,k} & J_{s,1}\\
J_{4-s,k} & 0_{4-s,1}%
\end{array}
\right]
\]
can be completed with zero rows and columns to a matrix $A\in\mathbb{Z}%
_{n}\left(  m\right)  $. Applying Proposition \ref{exa}, we find that
\[
\left\Vert A\right\Vert _{\ast}=\left\Vert B\right\Vert _{\ast}\leq\left\{
\begin{array}
[c]{ll}%
\sqrt{m+2\sqrt{3\left(  m-1\right)  /4}}\text{,} & \text{if }m\equiv1\text{
}\left(  \operatorname{mod}4\right)  \text{;}\\
\sqrt{m+2\sqrt{3\left(  m-3\right)  /4}}\text{,} & \text{if }m\equiv3\text{
}\left(  \operatorname{mod}4\right)  \text{;}\\
\sqrt{m+2\sqrt{m-2}}\text{,} & \text{if }m\equiv2\text{ }\left(
\operatorname{mod}4\right)  \text{.}%
\end{array}
\right.  ,
\]
completing the proof of Proposition \ref{pro4}.
\end{proof}

In the proof of Theorem \ref{t3} we shall use the following universal bound,
which follows from Proposition \ref{pro4} by an easy calculation.

\begin{corollary}
\label{cor1}If $m\leq4n,$ then
\[
\psi_{n}\left(  m\right)  <\sqrt{m-1}+1\text{.}%
\]

\end{corollary}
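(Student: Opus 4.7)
The plan is to verify the inequality by pure algebraic manipulation, handling each residue class of $m$ modulo $4$ separately. Squaring the desired inequality gives $\psi_{n}(m)^{2} < (\sqrt{m-1}+1)^{2} = m + 2\sqrt{m-1}$. Since each of the three upper bounds in Proposition \ref{pro4} has the form $\sqrt{m+2\sqrt{N(m)}}$ for an explicit $N(m)$, squaring reduces everything to verifying $N(m) < m-1$.

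First I would dispatch the three cases of Proposition \ref{pro4}. For $m \equiv 1 \pmod{4}$, we have $N(m) = 3(m-1)/4$, so $N(m) < m - 1$ is immediate from $3/4 < 1$ (and $m > 1$). For $m \equiv 3 \pmod{4}$, $N(m) = 3(m-3)/4$, and $3(m-3)/4 < m-1$ rearranges to $3m - 9 < 4m - 4$, i.e.\ $m > -5$, which holds trivially. For $m \equiv 2 \pmod{4}$, $N(m) = m-2 < m-1$ on sight.

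The only case not covered directly by Proposition \ref{pro4} is $m \equiv 0 \pmod 4$. Here I would write $m = 4k$ with $k \leq n$ (since $m \leq 4n$) and note that $m = 4 \cdot k$ or $m = 2 \cdot 2k$ exhibits $m$ as a product $ab$ with $2 \leq a \leq b \leq n$ once $n$ is at least $4$; then $\mathbb{Z}_n(m)$ contains a rank-one matrix and $\psi_n(m) = \sqrt{m} < \sqrt{m-1}+1$ is the elementary gap $\sqrt{m} - \sqrt{m-1} < 1$. (The finitely many tiny sub-cases where this factorization fails can be checked by hand.)

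I do not anticipate a genuine obstacle: everything is a one-line comparison after squaring. The only mild bookkeeping is remembering that Proposition \ref{pro4} was only stated for $m \not\equiv 0 \pmod{4}$, so the proof should include the brief remark covering $m \equiv 0 \pmod 4$ to make the corollary's hypothesis $m \leq 4n$ self-contained.
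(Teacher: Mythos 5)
Your proposal is correct and follows the same route that the paper signposts: square the target inequality to $\psi_n(m)^2 < m + 2\sqrt{m-1}$ and compare term by term with the three bounds of Proposition~\ref{pro4}, each of which has the form $\sqrt{m + 2\sqrt{N(m)}}$, reducing everything to $N(m) < m-1$. Your algebra in the three cases is right. What you add beyond the paper's single-sentence ``follows by an easy calculation'' is the explicit observation that Proposition~\ref{pro4} (both its statement and its proof, which takes $m = 4k+s$ with $1 \leq s \leq 3$) simply does not speak to $m \equiv 0 \pmod 4$, so the corollary as stated needs a separate remark for that residue class. Your fix --- exhibiting $m = 4k$ as a product $ab$ with $2 \leq a \leq b \leq n$ so that $\psi_n(m) = \sqrt{m} < \sqrt{m-1}+1$ --- is the natural one; for the record, the only parameter pair where the factorization $4\cdot k$ or $2\cdot 2k$ fails to land in range is $(n,m) = (3,8)$, where the matrix $J_{3,3}$ with one corner zeroed has trace norm $\sqrt{12} < \sqrt{7}+1$ by Proposition~\ref{exa}. (In the paper's actual use of the corollary inside Theorem~\ref{t3}, $m \equiv 2 \pmod 4$ always, which is probably why the authors did not bother with the $m \equiv 0$ case; your version makes the corollary self-contained as stated.)
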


\medskip

Having Proposition \ref{pro4} and Theorem \ref{tsm} in hand, we begin the
proof of Theorem \ref{t3}.

\begin{proof}
[\textbf{Proof of Theorem \ref{t3}}]Proposition \ref{pro4} and an easy
calculation show that inequality (\ref{mb}) always holds. Thus, we move
forward to the condition for equality in (\ref{mb}). Suppose that $n\geq5,$
$3n<m\leq4n,$ and
\begin{equation}
\psi_{n}\left(  m\right)  =\sqrt{m+2\sqrt{m-2}}. \label{equ}%
\end{equation}
In view of Proposition \ref{pro4}, we see that $m=4k+2.$

Our first goal is to show that there exists a positive integer $k$ such that either:

(a) $m=12k+2$ and $4k+1,$ $6k+1,$ $12k+1$ are primes, or

(b) $m=12k-2$ and $4k-1,$ $6k-1,$ $12k-1$ are primes.

To this end, we shall ptove the following claim:\medskip

\textbf{Claim A }\emph{The equation}%
\begin{equation}
m=ab+c \label{deq}%
\end{equation}
\emph{has no solution in integers }$a,b,c$\emph{ such that }$a\geq b\geq
5$\emph{ and }$c\in\left\{  -1,0,1\right\}  $\emph{. }\medskip

\emph{Proof. }Indeed, assume that $a,b,c$ is such a solution. \ Note first
that
\begin{equation}
b\leq a=\left(  m-c\right)  /b\leq\left(  4n+1\right)  /5<n. \label{dim}%
\end{equation}
For convenience, we consider the cases $c=0,1,-1$ separately.

If $c=0$, inequality (\ref{dim}) implies that there exists a matrix
$A\in\mathbb{Z}_{n}\left(  m\right)  $ that is equivalent to $J_{a,b}$, and
consequently $\psi_{n}\left(  m\right)  =\sqrt{m},$ contrary to (\ref{equ}).

If $c=1$, inequality (\ref{dim}) implies that there exists a matrix
$A\in\mathbb{Z}_{n}\left(  m\right)  $ that is equivalent to%
\[
\left[
\begin{array}
[c]{cc}%
J_{1,a} & 1\\
J_{b-1,a} & 0_{b-1,1}%
\end{array}
\right]  ,
\]
and Proposition \ref{exa} implies that
\[
\left\Vert A\right\Vert _{\ast}=\sqrt{m+2\sqrt{\frac{\left(  m-1\right)
\left(  b-1\right)  }{b}}}\leq\sqrt{m+2\sqrt{\frac{\left(  m-1\right)  4}{5}}%
}<\sqrt{m+2\sqrt{m-2}},
\]
contrary to (\ref{equ}).

Finally, if $c=-1$, inequality (\ref{dim}) implies that there exists a matrix
$A\in\mathbb{Z}_{n}\left(  m\right)  $ that is equivalent to%
\[
\left[
\begin{array}
[c]{cc}%
J_{b-1,a-1} & J_{b-1,1}\\
J_{1,a-1} & 0
\end{array}
\right]  ,
\]
and Proposition \ref{exa} implies that
\[
\left\Vert A\right\Vert _{\ast}=\sqrt{m+2\sqrt{\frac{\left(  m-b+1\right)
\left(  b-1\right)  }{b}}}\leq\sqrt{m+2\sqrt{\frac{\left(  m-1\right)  4}{5}}%
}<\sqrt{m+2\sqrt{m-2}},
\]
contrary to (\ref{equ}). Claim A is proved.$\hfill\square\medskip$

To reveal the consequences of Claim A, consider $m$ modulo $12.$ Clearly,
either $m=12k+2$ or $m=12k-2$ for some positive integer $k$, for if $m=12k+6$
for some integer $k\geq2$, then (\ref{deq}) has a solution with $a\geq b\geq5$
and $c=0$, contradicting Claim A.

Let us consider the case $m=12k+2$ in detail. First, note that $6k+1$ must be
a prime, as if $6k+1=ab$ for some integers $a\geq$ $b\geq2$, then $b\geq5,$
and letting $x=2a$ and $y=b,$ we see that $m=xy$ and $x\geq y\geq5$, contrary
to Claim A.

Next, if $4k+1$ is not a prime, say $4k+1=ab$ for some integers $a\geq$
$b\geq2$, then $b\geq5,$ and letting $x=3a$ and $y=b,$ we see that $m=xy-1,$
$x\geq y\geq5,$ contrary to Claim A.

Finally, if $12k+1$ is not a prime, say $12k+1=ab$ for some integers $a\geq
b\geq2$, then $b\geq5$, and letting $x=a$ and $y=b,$ we see that $m=xy+1,$
$x\geq y\geq5,$ contrary to Claim A.

Thus if $m=12k+2,$ then $4k+1,$ $6k+1$ and $12k+1$ are primes.

By the same argument we find that if $m=12k-2,$ then $4k-1,$ $6k-1$ and
$12k-11$ are primes.

It remains to prove the converses of the above implications. In particular,
let $m=12k+2,$ and let $4k+1,$ $6k+1$ and $12k+1$ be primes. We have to prove
that (\ref{equ}) holds. To this end, let $A\in\mathbb{Z}_{n}\left(  m\right)
$ be such that%
\[
\left\Vert A\right\Vert _{\ast}=\psi_{n}\left(  m\right)  \leq\sqrt
{m+2\sqrt{m-2}}.
\]
Clearly, $\left\Vert A\right\Vert _{\ast}>\sqrt{m},$ for otherwise there exist
integers $a$ and $b$ such that $m=ab$, $4\leq a\leq n$, and $4\leq b\leq n,$
which is a contradiction, as $m=2\left(  6k+1\right)  $ and $6k+1$ is a prime.
Now Corollary \ref{cor1} and Theorem \ref{tsm} imply that $A$ is equivalent to
a step matrix
\[
A^{\prime}=\left[
\begin{array}
[c]{cc}%
J_{s,p} & J_{s,1}\\
J_{r,p} & 0_{r,1}%
\end{array}
\right]  .
\]
Further, the premise $m>3n$ implies that $s+r\geq4.$ Our last goal is to show
that $r+s=4.$ Assume for a contradiction that $r+s\geq5$. Since $p\geq
\left\lfloor m/n\right\rfloor \geq3,$ if $s\neq1$ and $r\neq1,$ then
$A^{\prime}$ contains one of the matrices
\[
Z_{1}=\left[
\begin{array}
[c]{cccc}%
1 & 1 & 1 & 1\\
1 & 1 & 1 & 1\\
1 & 1 & 1 & 0\\
1 & 1 & 1 & 0\\
1 & 1 & 1 & 0
\end{array}
\right]  \text{ \ or \ }Z_{2}=\left[
\begin{array}
[c]{cccc}%
1 & 1 & 1 & 1\\
1 & 1 & 1 & 1\\
1 & 1 & 1 & 1\\
1 & 1 & 1 & 0\\
1 & 1 & 1 & 0
\end{array}
\right]  .
\]
Using Proposition \ref{exa}, we find that
\begin{align*}
\sigma_{2}^{2}\left(  Z_{1}\right)   &  =\frac{17-\sqrt{17^{2}-4\cdot
3\cdot2\cdot3}}{2}=\frac{17-\sqrt{217}}{2}>\frac{17-15}{2}=1\text{;}\\
\sigma_{2}^{2}\left(  Z_{2}\right)   &  =\frac{18-\sqrt{18^{2}-4\cdot
2\cdot3\cdot3}}{2}=\frac{18-\sqrt{252}}{2}>\frac{18-16}{2}=1.
\end{align*}
Hence, $\sigma_{1}\left(  A\right)  \leq m-1$ and Proposition \ref{pro1}
implies that $\left\Vert A\right\Vert _{\ast}>\sqrt{m-1}+1$, contradicting
Corollary \ref{cor1}. Therefore either $s=1$ or $r=1$. If $s=1$, then
$m=p\left(  r+s\right)  +1,$ which is a$.$contradiction, as $12k+1$ is a
prime. If $r=1,$ then $m+1=\left(  r+s\right)  \left(  p+1\right)  ,$ which is
a contradiction, as $r+s\geq5$ and $p+1\geq4,$ whereas $3$ and $4k+1$ are the
only divisors of $m+1$.

Thus, we see that $s+r=4.$ Since $s\equiv m$ $\left(  \operatorname{mod}%
4\right)  $, we get $s=2$, and Proposition \ref{exa} implies that (\ref{equ}) holds.

The same argument show that if $m=12k-2,$ and $4k-1,$ $6k-1,$ $12k-1$ are
primes, then (\ref{equ}) holds. Theorem \ref{t3} is proved.
\end{proof}

\bigskip

\textbf{Acknowledgement }This work has been done while the second author was
visiting the Department of Mathematical Sciences at the University of Memphis
in 2016/2017. Her stay was supported by Colciencias 727.


\begin{thebibliography}{9}                                                                                                %


\bibitem {Gut78}I. Gutman, The energy of a graph, \emph{Ber. Math.-Stat. Sekt.
Forschungszent. Graz} \textbf{103} (1978) 1--22.

\bibitem {GLS12}X. Li, Y. Shi, and I. Gutman, \emph{Graph Energy}, Springer,
New York, 2012, xii+266 pp.

\bibitem {HoJo91}R.A. Horn and C.R. Johnson,\emph{\ Topics in Matrix
Analysis}, Cambridge Univ. Press, Cambridge, 1994, viii+607 pp.

\bibitem {Nik16}V. Nikiforov, Beyond graph energy: norms of graphs and
matrices, \emph{Linear Algebra Appl.} \textbf{506} (2016) 82--138.\bigskip
\end{thebibliography}
\end{document}